\theoremstyle{definition}
\newcommand{\Q}{\mathbf{Q}}
\newcommand{\Z}{\mathbb{Z}}
\newtheorem{theorem}{Theorem}\newtheorem{conj}{Conjecture}\newtheorem{lem}{Lemma}
\newtheorem{example}{Example}\newtheorem{rmk}{Remark}
\DeclareMathOperator{\Rep}{Rep}
\DeclareMathOperator{\SU}{SU}
\DeclareMathOperator{\U}{U}
\DeclareMathOperator{\SO}{SO}
\DeclareMathOperator{\SL}{SL}
\DeclareMathOperator{\PSL}{PSL}
\DeclareMathOperator{\PSU}{PSU}
\DeclareMathOperator{\Mod}{Mod}
\DeclareMathOperator{\sVec}{sVec}
\newcommand{\CC}{\mathcal{C}}
\newcommand{\DD}{\mathcal{D}}
\newcommand{\BB}{\mathcal{B}}
\newcommand{\mfs}{\mathfrak{s}}
\newcommand{\mft}{\mathfrak{t}}
\newcommand{\hC}{\breve{\CC}}
\newcommand{\hT}{\hat{T}}
\newcommand{\hS}{\hat{S}}
\newcommand{\tS}{\tilde{S}}
\newcommand{\ot}{\otimes}
\newcommand{\oA}{\overline{A}}
\newcommand{\one}{\mathbf{1}}
\numberwithin{equation}{section}
\numberwithin{theorem}{section}\numberwithin{example}{section}
\numberwithin{conj}{section}\numberwithin{lem}{section}
\title{Congruence Subgroups and Super-Modular Categories}
\author{Parsa Bonderson}
\email{parsab@microsoft.com}
\address{Microsoft Research Station Q,
    University of California,
    Santa Barbara, CA
    U.S.A.}
\author{Eric C. Rowell}
\email{rowell@math.tamu.edu}
\address{Department of Mathematics,
    Texas A\&M University,
    College Station, TX
    U.S.A.}
\author{Zhenghan Wang}
\email{zhenghwa@microsoft.com}
\address{Microsoft Research Station Q and Department of Mathematics,
    University of California,
    Santa Barbara, CA
    U.S.A.}
    \author{Qing Zhang}
\email{zhangqing@math.tamu.edu}
\address{Department of Mathematics,
    Texas A\&M University,
    College Station, TX
    U.S.A.}
\date{\today}
\begin{document}

\begin{abstract}
A super-modular category  is a unitary pre-modular category with M\"uger center equivalent to the symmetric unitary category of super-vector spaces.  Super-modular categories are important alternatives to modular categories as any unitary pre-modular category is the equivariantization of a either a modular or super-modular category.  Physically, super-modular categories describe universal properties of quasiparticles in fermionic topological phases of matter.  In general one does not have a representation of the modular group $\mathrm{SL}(2,\mathbb{Z})$ associated to a super-modular category, but it is possible to obtain a representation of the (index 3) $\theta$-subgroup: $\Gamma_\theta<\mathrm{SL}(2,\mathbb{Z})$.   We study the image of this representation and conjecture a super-modular analogue of the Ng-Schauenburg Congruence Subgroup Theorem for modular categories, namely that the kernel of the $\Gamma_\theta$ representation is a congruence subgroup.    We prove this conjecture for any super-modular category that is a subcategory of modular category of twice its dimension, i.e. admitting a minimal modular extension.  Conjecturally, every super-modular category admits (precisely 16) minimal modular extensions and, therefore, our conjecture would be a consequence.
\end{abstract}
\maketitle
\thanks{E. Rowell and Q. Zhang were partially supported by NSF grant DMS-1410144, and Z. Wang by NSF grant DMS-1411212.  The authors thank M. Cheng, M. Papanikolas and Z. Sunic for valuable discussions.}
\section{Introduction}
A key part of the data for a modular category $\CC$ is the $S$ and $T$ matrices encoding the non-degeneracy of the braiding and the twist coefficients, respectively.  We will denote by $\tS$ the unnormalized matrix obtained as the invariants of the Hopf link so that $\tS_{0,0}=1$, while $S=\frac{\tS}{D}$ will denote the (unitary) normalized $S$-matrix where $D^2=\dim(\CC)$ is the categorical dimension and $D>0$.  Later, we will use the same conventions for any pre-modular category (for which $S$ may not be invertible).  The diagonal matrix $T:=\theta_i\delta_{i,j}$ has finite order (Vafa's theorem, see \cite{bakalov2001lectures}) for any pre-modular category.  For a modular category the $S$ and $T$ matrices satisfy (see e.g. \cite[Theorem 3.1.7]{bakalov2001lectures}):
\begin{enumerate}
\item $S^2=C$ where $C_{i,j}=\delta_{i,j^*}$ (so $S^4=C^2=I$)
\item $(ST)^3=\frac{D_+}{D}S^2$ where $D_+=\sum_i\tS_{0,i}^2\theta_i$
\item $TC=CT$.
\end{enumerate}
These imply that from any modular category $\CC$ of rank $r$ (i.e. with $r$ isomorphism classes of simple objects) one obtains a projective unitary representation  of the modular group $\rho:SL(2,\Z)\rightarrow \PSU(r)$ defined on generators by:
$\mfs=\begin{pmatrix} 0 & -1\\1&0\end{pmatrix}\rightarrow S$ and $\mft=\begin{pmatrix}1 & 1\\0&1\end{pmatrix}\rightarrow T$ composed with the canonical projection $\pi_r:\U(r)\rightarrow \PSU(r)$.  By rescaling the $S$ and $T$ matrices, $\rho$ may be lifted to a linear representation of $SL(2,\Z)$, but these lifts are not unique.  This representation has topological significance: one identifies the modular group with the mapping class group $\Mod(\Sigma_{1,0})$ of the torus ($\mft$ and $\mfs\mft^{-1}\mfs^{-1}$ correspond to Dehn twists about the meridian and parallel) and this projective representation is the action of the mapping class group on the Hilbert space associated to the torus by the modular functor obtained from $\CC$.

A subgroup $H<\SL(2,\Z)$ is called a \textbf{congruence subgroup} if $H$ contains a principal congruence subgroup $\Gamma(n):=\{A\in \SL(2,\Z): A\equiv I\pmod{n}\}$ for some $n\geq 1$.  Since $\Gamma(n)$ is the kernel of the reduction modulo $n$ map $\SL(2,\Z)\rightarrow \SL(2,\Z/n\Z)$, any congruence subgroup has finite index.  The \textbf{level} of a congruence subgroup $H$ is the minimal $n$ so that $\Gamma(n)<H$. 
More generally, for $G<\SL(2,\Z)$ we say $H<G$ is a congruence subgroup if $G\cap \Gamma(n)<H$ with the level of $H$ defined similarly.  

The connection between topology and number theory found through the representation above is deepened by the following Congruence Subgroup Theorem:

 \begin{theorem}[\cite{ng2010congruence}]\label{nstheorem}
  Let $\CC$ be a modular category of rank $r$ with $T$ matrix of order $N$. Then the projective representation $\rho:\SL(2,\Z)\rightarrow \PSU(r)$ has $\ker(\rho)$ a congruence subgroup of level $N$.
 \end{theorem}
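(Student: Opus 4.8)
\emph{An arithmetic model.} The plan is to follow Ng and Schauenburg \cite{ng2010congruence}, combining the arithmetic of the modular data with a known presentation of $\SL(2,\Z/N\Z)$. First I would replace $\rho$ by a projective representation with cyclotomic matrix entries. The assignment $\mfs\mapsto[\tS]$, $\mft\mapsto[T]$ into $\mathrm{PGL}_r(\mathbb{C})$ is a homomorphism $\hat\rho$: since $\tS^2=D^2C$ with $C^2=\Id$ one has $[\tS]^4=[\Id]$, while $(\tS T)^3\tS^{-2}=D_+\Id$ by relation (2) gives $([\tS][T])^3=[\tS]^2$, and $\mfs^4$, $(\mfs\mft)^3\mfs^{-2}$ are defining relators of $\SL(2,\Z)$. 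As $\hat\rho$ differs from $\rho$ only by a scalar, $\ker\hat\rho=\ker\rho$. The point of this model is that, by the cyclotomic integrality of modular data — a theorem whose ingredients are the balancing relation $\theta_i\theta_j\tS_{ij}=\sum_k N_{i^*j}^{\,k}\,\theta_k\,\tS_{0k}$ and the fact that the dimensions $\tS_{0k}$ are algebraic integers which modularity forces into $\Q(\zeta_N)$ — every $\tS_{ij}$ and every $\theta_k$ lies in $\Q(\zeta_N)$, where $N=\mathrm{ord}(T)$; so $\hat\rho\colon\SL(2,\Z)\to\mathrm{PGL}_r(\Q(\zeta_N))$. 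I would also record that $[T]$ has order exactly $N$, since $\theta_{\mathbf{1}}=1$ forces any scalar power of $T$ to be $\Id$.

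\emph{Galois symmetry.} Next I would set up the Galois symmetry of modular data: for $a\in(\Z/N\Z)^\times$ with associated $\sigma_a\in\mathrm{Gal}(\Q(\zeta_N)/\Q)$, $\zeta_N\mapsto\zeta_N^a$, there is a permutation $\hat a$ of the simple objects and signs $\epsilon_a(i)=\pm1$ with $\sigma_a(\tS_{ij})=\epsilon_a(i)\,\tS_{\hat a(i),j}$, together with a compatible action of $\sigma_a$ on the twists. Packaging $(\hat a,\epsilon_a)$ as a signed-permutation matrix $G_a$, this upgrades to the statement that $\hat\rho$ is the restriction of a projective representation of $\mathrm{GL}(2,\Z/N\Z)$ in which $\sigma_a$ is realized by conjugation by the image of $\mathrm{diag}(a,1)$, which equals $[G_a]$ up to sign; briefly, $\sigma_a\circ\hat\rho=\mathrm{conj}_{[G_a]}\circ\hat\rho$ on $\SL(2,\Z)$. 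The mechanism is that conjugation by $\mathrm{diag}(a,1)$ raises $\mft$ to its $a$-th power, matching $\sigma_a(T)=T^a$; the careful version involves the Frobenius–Schur signs and an exponent $a^2$ rather than $a$ on the twists, which is bookkeeping internal to the established arithmetic of modular data.

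\emph{Descent and level.} It then remains to show $\hat\rho$ factors through $\SL(2,\Z/N\Z)$, i.e. $\Gamma(N)\le\ker\hat\rho=\ker\rho$, and here I would invoke a presentation of $\SL(2,\Z/N\Z)$ on $\mfs,\mft$. The relations $\mfs^4=1$, $(\mfs\mft)^3=\mfs^2$, $\mft^N=1$ are satisfied by $\hat\rho$ — the first two by construction, the last since $[T]^N=[\Id]$; the remaining relations can be described as expressing how the diagonal elements of $\SL(2,\Z/N\Z)$, built from $\mfs$ and from $\mft^a$ as $a$ ranges over $(\Z/N\Z)^\times$, compose, and these are exactly what the functoriality $\sigma_{ab}=\sigma_a\circ\sigma_b$ of the Galois action, together with the previous paragraph, forces on $\hat\rho$ — modulo the cyclic center of its image, which is treated separately. (Equivalently one runs the argument through generalized Frobenius–Schur indicators: the entries of $\hat\rho\begin{pmatrix}1&0\\n&1\end{pmatrix}=\hat\rho(\mfs)\hat\rho(\mft)^{-n}\hat\rho(\mfs)^{-1}$ are, up to normalization, the $n$-th indicators of $\CC$, and cyclotomic integrality makes them depend only on $n\bmod N$.) This gives $\Gamma(N)\le\ker\rho$; the level is then exactly $N$, since it divides $N$ (as $\Gamma(N)\le\ker\rho$) and is a multiple of $N$ (because $\hat\rho$ factors through $\SL(2,\Z/\ell\Z)$ for $\ell$ the level, in which $\mft$ has order $\ell$ while $\hat\rho(\mft)=[T]$ has order $N$, so $N\mid\ell$).

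\emph{The hard point.} The one genuinely hard step is the descent — promoting an \emph{a priori} possibly-infinite, possibly-irrational $\hat\rho$ to a representation of the finite group $\SL(2,\Z/N\Z)$. It rests squarely on the two non-formal inputs (cyclotomic integrality of $\tS$, and the signed-permutation Galois symmetry with its $a$-versus-$a^2$ bookkeeping) and on matching their consequences with an honest presentation of $\SL(2,\Z/N\Z)$: by contrast, $\mft^N=1$ together with $(\mfs\mft\mfs^{-1})^N=1$ alone only force $\hat\rho$ through the quotient of $\SL(2,\Z)$ by the normal closure of $\mft^N$, which is generally infinite and far from a congruence quotient, and it is precisely the extra ``torus relations'' that the arithmetic must supply.
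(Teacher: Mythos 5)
The paper does not prove this statement: Theorem \ref{nstheorem} is imported verbatim from \cite{ng2010congruence} and used as a black box, so there is no internal proof to compare against. What you have written is an outline of Ng--Schauenburg's own argument, and at the level of architecture it is faithful: pass to a cyclotomic model of the projective representation with entries in $\Q(\zeta_N)$, exploit the Galois (signed-permutation) symmetry of the modular data, descend to $\SL(2,\Z/N\Z)$, and pin the level exactly by noting that $\theta_\one=1$ forces the projective order of $T$ to be $N$, so $N\mid\ell$ while $\Gamma(N)\le\ker\rho$ gives $\ell\le N$. That last bookkeeping, and the reduction $\ker\hat\rho=\ker\rho$, are fine.

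As a proof, however, there is a genuine gap, in two places. First, the step you yourself isolate as the hard point -- the descent -- is named rather than performed: after $\mfs^4$, $(\mfs\mft)^3\mfs^{-2}$ and $\mft^N$, the remaining defining relations of $\SL(2,\Z/N\Z)$ (the ``torus relations'' built from $\mfs$ and the powers $\mft^a$, $a\in(\Z/N\Z)^\times$) are exactly what must be verified on the image, and verifying them is the content of the equivariant/generalized Frobenius--Schur indicator machinery that occupies the bulk of \cite{ng2010congruence}; saying they are ``forced by the functoriality of the Galois action'' does not discharge this. Second, there is a circularity risk in the input you invoke: the signed-permutation Galois action on $\tS$ is classical and independent, but the strong form you lean on -- the compatible action on the twists with exponent $a^2$, equivalently the extension of $\hat\rho$ to a projective representation of $\mathrm{GL}(2,\Z/N\Z)$ with $\sigma_a$ realized by conjugation by $[G_a]$ -- presupposes that $\hat\rho$ already factors through $\SL(2,\Z/N\Z)$, and in the literature it is obtained as a consequence of the congruence theorem (via the same indicator techniques), not prior to it. So as written the plan either assumes what is to be proved or must retreat to the indicator route you mention only parenthetically, which is where the actual proof lives. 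For the purposes of this paper none of this matters, since the theorem is correctly cited and only its statement is used.
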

In particular the image of $\rho$ factors over $\SL(2,\Z/N\Z)$ and hence is a finite group. This fact has many important consequences: for example, it is related to rank-finiteness \cite{BNRW1} and can be used in classification problems \cite{BNRW2}.

  A \textbf{super-modular} category is a unitary ribbon fusion category whose M\"uger center is equivalent, as a unitary symmetric ribbon fusion category, to the category $\sVec$ of super-vector spaces (equipped with its unique structure as a unitary spherical symmetric fusion category).   Super-modular categories (or slight variations) have been studied from several perspectives, see \cite{Bon,DMNO,16fold,BCT,KLW} for a few examples.  An algebraic motivation for studying these categories is the following: any unitary braided fusion category is the equivariantization \cite{DGNO} of either a modular or super-modular category (see \cite[Theorem 2]{Sawin}).  Physically, super-modular categories provide a framework for studying fermionic topological phases of matter \cite{16fold}.  Topological motivations include the study of spin 3-manifold invariants (\cite{Sawin,Bl,BM}) and $(3+1)$-TQFTs (\cite{WW}).  
  
  \begin{rmk} We restrict to unitary categories both for mathematical convenience and for their physical significance.  On the other hand, there is a non-unitary version $\sVec^{-}$ of $\sVec$: the underlying (non-Tannakian) symmetric fusion category is the same, but with the other possible spherical structure, which leads to negative dimensions.  We could define super-modular categories more generally as pre-modular categories $\BB$ with M\"uger center equivalent to either of $\sVec$ or $\sVec^{-}$.  However, we do not know of any examples $\BB$ with $\BB^{\prime}\cong\sVec^{-}$ that are not simply of the form $\CC\boxtimes\sVec^{-}$ for some modular category $\CC$ (A. Brugui\`eres asked the second and third authors for such an example in 2016).
  \end{rmk}
 
 One interesting feature of super-modular categories $\BB$ is that their $S$ and $T$ matrices have tensor decompositions (\cite[Appendix]{BNQ},\cite[Theorem III.5]{16fold}):
\begin{equation}\label{stfactored}
S=
\frac{1}{\sqrt{2}}\begin{pmatrix}
	1&1\\1&1\\
	\end{pmatrix}\ot \hS, \quad T= \begin{pmatrix}
	1&0\\0&-1\\
	\end{pmatrix}\ot \hT\end{equation}
    where $\hS$ is unitary and $\hT$ is a diagonal (unitary) matrix, depending on $r/2-1$ sign choices.  Two naive questions motivated by the above are: 1) Do $\hS$ and a choice of $\hT$ provide a (projective) representation of $\SL(2,\Z)$? and 2) Is the group generated by $\hS$ and a choice of $\hT$ finite? Of course if $\BB=\sVec\boxtimes \DD$ for some modular category $\DD$ (\emph{split super-modular}) then the answer to both is yes.  More generally, as Example \ref{exnofinite} below illustrates, the answer to both questions is no.

The physical and topological applications of super-modular categories motivate a more refined question as follows. The consideration of fermions on a torus \cite{MooreVafa} leads to the study of spin structures on the torus $\Sigma_{1,0}$:  there are three even spin structures $(A,A),(A,P),(P,A)$ and one odd spin structure $(P,P)$, where $A,P$ denote antiperiodic and periodic boundary conditions.  The full mapping class group $\Mod(\Sigma_{1,0})=\SL(2,\Z)$ permutes the even spin structures: $\mfs$ interchanges $(P,A)$ and $(A,P)$, and preserves $(A,A)$, whereas $\mft$ interchanges $(A,A)$ and $(P,A)$ and preserves $(A,P)$.  Note that both $\mfs$ and $\mft^2$ preserve $(A,A)$, so that the index $3$ subgroup $\Gamma_\theta:=\langle \mfs,\mft^2\rangle<\SL(2,\Z)$ is the spin mapping class group of the torus equipped with spin structure $(A,A)$.  The spin mapping class group of the torus with spin structure $(A,P)$ or $(P,A)$ is similarly generated by $\mfs^2$ and $\mft$, which is projectively isomorphic to $\Z$.  On the other hand, $\Gamma_\theta$ is projectively the free product of $\Z/2\Z$ with $\Z$ (\cite{Rademacher}).  Now the matrix $\hT^2$ is unambiguously defined for any super-modular category $\BB$, and in \cite[Theorem II.7]{16fold} it is shown that $\mfs\rightarrow \hS$ and $\mft^2\rightarrow\hT^2$ defines a projective representation $\hat{\rho}$ of $\Gamma_\theta$.  We propose the following:

 \begin{conj}\label{mainconj} Let $\BB$ be a super-modular category of rank $2k$ and $\hS$ and $\hT^2$ the corresponding matrices as in equation (\ref{stfactored}).  Then the projective representation  $\hat{\rho}:\Gamma_\theta\rightarrow \PSU(k)$ given by $\hat{\rho}(\mfs)=\pi_k(\hS)$ and $\hat{\rho}(\mft^2)=\pi_k(\hT^2)$ has kernel a congruence subgroup.
   \end{conj}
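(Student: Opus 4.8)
The plan is to prove the conjecture for every super-modular category $\BB$ that admits a \emph{minimal modular extension} --- i.e.\ that embeds as a full ribbon fusion subcategory of a modular category $\CC$ with $\dim(\CC)=2\dim(\BB)$ --- which, granting the expected existence of such extensions, yields the conjecture unconditionally. The key idea is to realize $\hat{\rho}$, up to projective equivalence, as a genuine subrepresentation of the restriction to $\Gamma_\theta$ of the full modular representation $\rho_\CC$ of $\CC$, and then apply Theorem \ref{nstheorem} to $\CC$.

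First I would record the structure of the extension. Let $f\in\BB$ be the fermion generating the M\"uger center $\sVec$; it is invertible with $\theta_f=-1$ and $f\ot f\cong\one$, so $\langle f\rangle\cong\sVec$ inside $\CC$. By the centralizer-dimension formula in a modular category, the fusion subcategory of objects of $\CC$ that centralize $f$ has dimension $\dim(\CC)/2=\dim(\BB)$; since every object of $\BB$ centralizes $f$, that subcategory is exactly $\BB$. Hence $\CC$ is faithfully $\Z/2\Z$-graded, $\CC=\CC_{\bar 0}\oplus\CC_{\bar 1}$ with $\CC_{\bar 0}=\BB$, the grade of a simple object $W$ being read off from its monodromy charge $\chi_W(f)\in\{\pm 1\}$ with $f$ (so $\chi_W(f)=+1$ exactly when $W\in\BB$). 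Two standard ribbon identities will be used: the simple-current relation $\tS_{X\ot f,\,W}=\chi_W(f)\,\tS_{X,W}$ for all simple $X,W$, and $\theta_{X\ot f}=-\theta_X$ for $X\in\BB$. The latter forces $X\ot f\not\cong X$, so $\mathrm{Irr}(\BB)$ is a free $\langle f\rangle$-set with exactly $k$ orbits; more generally $\theta_{W\ot f}=\pm\theta_W$ for every simple $W$.

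Next I would isolate the invariant subspace. Let $V_\CC=\bigoplus_{W}\mathbb{C}e_W$ carry $\rho_\CC$ with $\rho_\CC(\mfs)=S_\CC$ and $\rho_\CC(\mft)=T_\CC$; let $P_f$ be the permutation matrix induced by $(-)\ot f$ (an involution, since $f\ot f\cong\one$), and let $\epsilon$ be the diagonal matrix with $W$-th entry $\chi_W(f)$, i.e.\ $+1$ on $V_\BB:=\bigoplus_{X\in\mathrm{Irr}(\BB)}\mathbb{C}e_X$ and $-1$ on its complement. Using the two ribbon identities one checks that $Q:=\epsilon P_f$ is an involution commuting with both $S_\CC$ and $T_\CC^2$, hence that its $(+1)$-eigenspace is invariant under $\rho_\CC|_{\Gamma_\theta}=\langle S_\CC,T_\CC^2\rangle$. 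Intersecting with $V_\BB$ (on which $Q=P_f$) produces the canonical $k$-dimensional subspace $V_{\BB,+}=\mathrm{span}\{\,e_X+e_{X\ot f}\,\}$. I claim $V_{\BB,+}$ is already invariant: $T_\CC^2$ preserves it because $\theta_{X\ot f}^2=\theta_X^2$, and for $S_\CC$ the point is that the $W$-components of $S_\CC(e_X+e_{X\ot f})$ vanish for every $W\in\CC_{\bar 1}$ --- they cancel since $\tS_{X\ot f,W}=-\tS_{X,W}$ there --- so $S_\CC(e_X+e_{X\ot f})\in V_\BB$, and it is $Q$-fixed because $Q$ commutes with $S_\CC$. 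Finally, comparing with the tensor decomposition (\ref{stfactored}): after rescaling by $D_\CC=\sqrt 2\,D$, the $\BB\times\BB$ block of $S_\CC$ is $\tfrac12\left(\begin{smallmatrix}1&1\\1&1\end{smallmatrix}\right)\ot\hS$ and the $\BB$-block of $T_\CC^2$ is $\Id\ot\hT^2$, so in the basis $\{e_X+e_{X\ot f}\}$ one reads off $S_\CC|_{V_{\BB,+}}=\hS$ and $T_\CC^2|_{V_{\BB,+}}=\hT^2$. As $\Gamma_\theta=\langle\mfs,\mft^2\rangle$, this identifies $\hat{\rho}$ with the subrepresentation of $\rho_\CC|_{\Gamma_\theta}$ on $V_{\BB,+}$, up to projective equivalence.

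The conjecture, in this case, then follows immediately: by Theorem \ref{nstheorem}, $\Gamma(N')\subseteq\ker(\rho_\CC)$ with $N'=\mathrm{ord}(T_\CC)$, hence $\Gamma_\theta\cap\Gamma(N')\subseteq\ker(\rho_\CC|_{\Gamma_\theta})$, and since $\hat{\rho}(\gamma)$ is scalar whenever $\rho_\CC(\gamma)$ is, we get $\Gamma_\theta\cap\Gamma(N')\subseteq\ker(\hat{\rho})$, so $\ker(\hat{\rho})$ is a congruence subgroup of $\Gamma_\theta$. I expect the only real work --- and the main obstacle --- to lie in the structural bookkeeping of the preceding paragraphs: proving $\BB$ is the centralizer of $f$ in $\CC$, verifying the monodromy and twist identities, and matching the blocks of $S_\CC$ and $T_\CC$ with (\ref{stfactored}). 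None of this is deep for minimal modular extensions, but it must be set up with care, and with a little more effort one should also be able to pin down the level of $\ker(\hat{\rho})$ in terms of $\mathrm{ord}(\hT^2)$. The conjecture remains open in general only to the extent that the existence of minimal modular extensions of arbitrary super-modular categories is still unknown.
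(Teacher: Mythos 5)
Your proposal is correct and takes essentially the same route as the paper: pass to the minimal modular (spin) extension $\CC$, apply Theorem \ref{nstheorem} to its $\SL(2,\Z)$ representation, restrict to $\Gamma_\theta=\langle\mfs,\mft^2\rangle$, and identify $\hat{\rho}$ with the subrepresentation of $\rho_\CC|_{\Gamma_\theta}$ on $\mathrm{span}\{e_X+e_{X\ot\psi}\}$, so that $\Gamma(N)\cap\Gamma_\theta\subseteq\ker(\rho_\CC|_{\Gamma_\theta})\subseteq\ker(\hat{\rho})$. The only difference is in the bookkeeping: you exhibit the invariant subspace via the involution $Q=\epsilon P_f$ commuting with $S_\CC$ and $T_\CC^2$, while the paper uses the explicit partitioned basis $\Pi_0^{\pm},\Pi_v^{\pm},\Pi_\sigma$ and the resulting block forms of $S$ and $T$ (which also records the other invariant subspaces), and it further observes that $N$ is even, so $\Gamma(N)\subseteq\Gamma_\theta$ and the kernel has level at most $N$.
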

   
 In particular if this conjecture holds then $\hat{\rho}(\Gamma_\theta)$ is finite.   We do not know what to expect the level of $\ker{\hat{\rho}}$ to be (in terms of, say, the order of $\hT^2$), but we provide some examples below.
 
An important outstanding conjecture (\cite[Question 5.15]{DNO}, \cite[Conjecture III.9]{16fold}, see also \cite[Conjecture 5.2]{M2}) is that every super-modular category $\BB$ has a \emph{minimal modular extension}: that is, $\BB$ can be embedded in a modular category $\CC$ of 
dimension $\dim(\CC)=2\dim(\BB)$.  One may characterize such $\CC$: they are called \emph{spin modular categories} (\cite{BBC}), see Section \ref{spinsection} below.  Our main result proves Conjecture \ref{mainconj} for super-modular categories admitting minimal modular extensions.

\section{Preliminaries}

\subsection{Super-Modular Categories}
Whereas one may always define an $S$-matrix for any ribbon fusion category $\BB$, it may be degenerate.  This failure of modularity is encoded it the subcategory of transparent objects called the \textbf{M\"uger center} $\BB^\prime$. Here an object $X$ is called \textbf{transparent} if all the double braidings with $X$ are trivial: $c_{Y,X}c_{X,Y}=Id_{X\ot Y}$.  By a theorem of Brugui\`eres \cite{Brug} the simple objects in $\BB^\prime$ are those $X$ with $\tS_{X,Y}=d_Xd_Y$ for all simple $Y$, where $d_Y=\dim(Y)=\tS_{\one,Y}$ is the categorical dimension of the object $Y$.  The M\"uger center is obviously \textbf{symmetric}, that is, $c_{Y,X}c_{X,Y}=Id_{X\ot Y}$ for all $X,Y\in\BB^\prime$.  Symmetric fusion categories have been classified by Deligne \cite{Del}, in terms of representations of supergroups.    In the case that $\BB^\prime\cong\Rep(G)$ (i.e. is Tannakian), the modularization (de-equivariantization) procedure of Brugui\`eres \cite{Brug} and M\"uger \cite{M3} yields a modular category $\BB_G$ of dimension $\dim(\BB)/|G|$.  Otherwise, by taking a maximal Tannakian subcategory $\Rep(G)\subset \BB^\prime$ the de-equivariantization $\BB_G$ has M\"uger center $(\BB_G)^\prime\cong \sVec$, the symmetric fusion category of super-vector spaces.  Generally, a braided fusion category $\BB$ with $\BB^\prime\cong\sVec$ as symmetric fusion categories is called \textbf{slightly degenerate} \cite{DGNO}. 

 The symmetric fusion category $\sVec$ has a unique spherical structure compatible with unitarity and has $S-$ and $T-$matrices:
$S_{\sVec}=
\frac{1}{\sqrt{2}}\begin{pmatrix}
	1&1\\1&1\\
	\end{pmatrix}$ and $T_{\sVec}=
\begin{pmatrix}
	1&0\\0&-1\\
	\end{pmatrix}$.

From this point on we will assume that all our categories are unitary, so that $\sVec$ is a unitary symmetric fusion category.  A unitary slightly degenerate ribbon category will be called \textbf{super-modular}.
In other terminology, we say $\BB$ is super-modular if its M\"uger center is generated by a \textbf{fermion}, that is, an object $\psi$ with $\psi^{\otimes 2}\cong\one$ and $\theta_\psi=-1$.

Equation (\ref{stfactored}) shows that the $S$ and $T$ matrices of any super-modular category can be expressed as (Kronecker) tensor products: $S=S_{\sVec}\otimes\hS$ and $T=T_{\sVec}\otimes\hT $ with $\hS$ uniquely determined and $\hT$ determined by some sign choices.
The projective group generated by $\hS$ and $\hT$ may be infinite for all choices of $\hT$ as the following example illustrates:
 \begin{example}\label{exnofinite}
 Consider the modular category $ \SU(2)_{6} $. The label set is $ I = \{0, 1, 2, 3, 4, 5, 6\} $. The subcategory $\PSU(2)_{6}  $ is generated by 4 simple objects with even labels: $ X_{0}= \one, X_{2}, X_{4}, X_{6}$. 　We have $ \hat{S}=\dfrac{1}{\sqrt{4+2\sqrt{2}}}\begin{pmatrix}1 & 1+\sqrt{2}\\ 1+\sqrt{2} & -1\end{pmatrix} $ and $\hat{T}=\begin{pmatrix}1 & 0\\ 0 & \pm i\end{pmatrix} $. For either choice of $\hT$ the eigenvalues of $\hS\hT$ are not roots of unity: one checks that they satisfy the irreducible polynomial $x^{16}-x^{12}+\frac{1}{4}x^8-x^4+1$, which has non-abelian Galois group and is not monic over $\Z$.
 
\end{example}

\subsection{The $\theta$-subgroup of $\SL(2,\Z)$}
The index $3$ subgroup $\Gamma_\theta<\SL(2,\Z)$ generated by $\mfs$ and $\mft^2$ has a uniform description (see e.g. \cite{Koh}):
$$\Gamma_\theta=\{\begin{pmatrix} a& b\\ c&d\end{pmatrix}\in \SL(2,\Z): ac\equiv bd\equiv 0\pmod{2}\}.$$ The notation $\Gamma_\theta$ comes from the fact that Jacobi's $\theta$ series $\theta(z):=\sum_{n=-\infty}^{\infty} e^{n^2\pi \mathrm{i}z}$ is a modular form of weight $1/2$ on $\Gamma_\theta$.
Moreover,$\Gamma_\theta$ is isomorphic to $\Gamma_0(2)$, the Hecke congruence subgroup of level $2$ defined as those matrices in $\SL(2,\Z)$ that are upper triangular modulo $2$, and $\Gamma(2)$ is a subgroup of both $\Gamma_0(2)$ and $\Gamma_\theta$.  In particular $\Gamma_0(2)$ and $\Gamma_\theta$ are distinct, yet isomorphic, congruence subgroups of level $2$.  An explicit isomorphism $\vartheta: \Gamma_\theta\rightarrow \Gamma_0(2)$ is given by $\vartheta(\mathfrak{g})=M\mathfrak{g}M^{-1}$ where $M=\begin{pmatrix} 1 & 1\\0 &2\end{pmatrix}$. 
This can be verified directly, via: $$M\begin{pmatrix} a& b\\ c&d\end{pmatrix}M^{-1}=\begin{pmatrix} a+c&\frac{d+b-a-c}{2}
\\2c&d-c\end {pmatrix}.$$
Observe that $\vartheta(\Gamma(n))=\Gamma(n)$ for any $n$, and for $n$ even $\Gamma(n)\lhd\Gamma_\theta$.  In particular, we see that $\Gamma_\theta/\Gamma(n)<\SL(2,\Z)/\Gamma(n)$ is isomorphic to an index 3 subgroup of $\SL(2,\Z/n\Z)$ that is not normal.  Suppose $\varphi:\Gamma_\theta\rightarrow H$ has kernel a congruence subgroup, i.e. $\Gamma(n)<\ker(\varphi)$.  The congruence level of $\ker(\varphi)$, i.e. the minimal $n$ with $\Gamma(n)<\ker(\varphi)$, is the minimal $n$ so that 
$\Gamma_\theta/\Gamma(n)\twoheadrightarrow \varphi(\Gamma_\theta)$.  The following provides a characterization of such quotients:
\begin{lem}\label{index 3} Suppose that $n=2^kq$ with $k\geq 1$ and $q$ odd. Denote by $P_k$ a $2$-Sylow subgroup of $\SL(2,\Z/2^k\Z)$. Then, $$\Gamma_\theta/\Gamma(n)\cong P_k\times \SL(2,\Z/q\Z).$$
\end{lem}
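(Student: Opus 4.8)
The plan is to decompose the question modulo prime powers using the Chinese Remainder Theorem, and then reduce the problem to an explicit computation of the image of $\Gamma_\theta$ in $\SL(2,\Z/2^k\Z)$.

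First I would invoke the Chinese Remainder Theorem to write $\SL(2,\Z/n\Z) \cong \SL(2,\Z/2^k\Z) \times \SL(2,\Z/q\Z)$ for $n = 2^k q$ with $q$ odd, and correspondingly $\Gamma(n) = \Gamma(2^k) \cap \Gamma(q)$ inside $\SL(2,\Z)$. Since $\Gamma_\theta$ is defined by a congruence condition only at the prime $2$ (namely $ac \equiv bd \equiv 0 \pmod 2$), it contains $\Gamma(2)$, hence a fortiori $\Gamma(q)$ for $q$ odd; so $\Gamma_\theta$ surjects onto the full factor $\SL(2,\Z/q\Z)$ under reduction mod $q$. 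Thus under the reduction map $\Gamma_\theta \to \SL(2,\Z/n\Z)$ the image is a subgroup of $\SL(2,\Z/2^k\Z) \times \SL(2,\Z/q\Z)$ whose projection to the second factor is onto. To pin the image down exactly as a direct product, I would use Goursat's lemma: it suffices to check that the image also surjects onto the first factor's relevant part and that the two projections are ``independent.'' The cleanest way is to observe that $\Gamma_\theta \supset \Gamma(2^k)$ composed appropriately — more precisely, the kernel of $\Gamma_\theta \to \SL(2,\Z/2^k\Z)$ is $\Gamma_\theta \cap \Gamma(2^k)$, which surjects onto $\SL(2,\Z/q\Z)$ because it still contains $\Gamma(2^k q)$... wait, that is exactly $\Gamma(n)$, and we need its image mod $q$, which is all of $\SL(2,\Z/q\Z)$ by strong approximation / surjectivity of $\SL(2,\Z) \to \SL(2,\Z/q\Z)$ restricted to $\Gamma(2^k)$. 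This independence gives $\Gamma_\theta/\Gamma(n) \cong \big(\text{image of }\Gamma_\theta\text{ in }\SL(2,\Z/2^k\Z)\big) \times \SL(2,\Z/q\Z)$.

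The remaining, and main, step is to identify the image of $\Gamma_\theta$ in $\SL(2,\Z/2^k\Z)$ with a $2$-Sylow subgroup $P_k$. The image is $\Gamma_\theta/(\Gamma_\theta \cap \Gamma(2^k))$, which equals $\{A \in \SL(2,\Z/2^k\Z) : ac \equiv bd \equiv 0 \pmod 2\}$ — call it $G_k$. I would compute $|\SL(2,\Z/2^k\Z)| = 2^{3k} \cdot |\SL(2,\Z/2\Z)| / 8 = 6 \cdot 2^{3(k-1)} \cdot 8 / 8$... more carefully, $|\SL(2,\Z/2^k\Z)| = 2^{3k-2} \cdot 3$ for $k \geq 1$ (since $|\SL(2,\Z/2\Z)| = 6$ and each lift step multiplies by $2^3$). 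Since $[\SL(2,\Z):\Gamma_\theta] = 3$ and $\Gamma(2^k) \subset \Gamma_\theta$, we get $[\SL(2,\Z/2^k\Z) : G_k] = 3$, so $|G_k| = 2^{3k-2}$, which is exactly the $2$-part of $|\SL(2,\Z/2^k\Z)|$. Therefore $G_k$ is a subgroup of $2$-power order of maximal possible size, i.e.\ a $2$-Sylow subgroup. Combining this with the CRT decomposition from the previous paragraph yields $\Gamma_\theta/\Gamma(n) \cong P_k \times \SL(2,\Z/q\Z)$.

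The step I expect to be the main obstacle is making the Goursat/independence argument fully rigorous — specifically, verifying that the two projections of the image of $\Gamma_\theta$ in $\SL(2,\Z/2^k\Z)\times\SL(2,\Z/q\Z)$ have trivially intersecting kernels, i.e.\ that the image is genuinely the direct product and not merely a subdirect product. This hinges on the fact that reduction mod $q$ sends $\Gamma_\theta \cap \Gamma(2^k)$ onto all of $\SL(2,\Z/q\Z)$; equivalently, that $\Gamma(2^k)$ itself surjects onto $\SL(2,\Z/q\Z)$, which is a standard consequence of surjectivity of $\SL(2,\Z)\to\SL(2,\Z/q\Z)$ together with the fact that $\Gamma(2^k)$ has finite index and $\gcd(2,q)=1$ (one can exhibit explicit generators, or cite strong approximation for $\SL_2$). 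The order count for $\SL(2,\Z/2^k\Z)$ and the index-$3$ bookkeeping are routine but must be stated carefully for $k=1$ versus $k \geq 2$.
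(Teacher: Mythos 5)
Your proposal is correct, but it proves the lemma by a genuinely different route than the paper. You compute the image of $\Gamma_\theta$ in $\SL(2,\Z/n\Z)$ directly: since $k\geq 1$ you have $\Gamma(2^k)\subset\Gamma(2)\subset\Gamma_\theta$, and $\Gamma(2^k)$ reduces onto $\{1\}\times\SL(2,\Z/q\Z)$ (by surjectivity of $\SL(2,\Z)\to\SL(2,\Z/n\Z)$ and CRT), so the image of $\Gamma_\theta$ contains the full second factor and is therefore exactly $G_k\times\SL(2,\Z/q\Z)$, where $G_k$ is the mod-$2^k$ image; the index-$3$/order count $|\SL(2,\Z/2^k\Z)|=3\cdot 2^{3k-2}$ then forces $G_k$ to be a $2$-Sylow subgroup. (Two small remarks: Goursat's lemma is not needed once you observe $\{1\}\times\SL(2,\Z/q\Z)$ lies inside the image, and "strong approximation" is overkill for the elementary surjectivity fact you use; also no $k=1$ versus $k\geq 2$ case split is required.) The paper instead starts from its earlier observation that $\Gamma_\theta/\Gamma(n)$ is a \emph{non-normal index-$3$} subgroup of $\SL(2,\Z/n\Z)$ and classifies all such subgroups: by CRT they come from non-normal index-$3$ subgroups of the local factors $\SL(2,\Z/p^{\ell_p}\Z)$; the $2$-Sylow subgroups of $\SL(2,\Z/2^k\Z)$ qualify, while for odd $p$ the coset action would give a surjection onto $\mathfrak{S}_3$, hence a $2$-dimensional irreducible representation, which is ruled out using the Nobs--Eholzer classification of $2$-dimensional irreducibles of $\SL(2,\Z/p^k\Z)$. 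Your argument is more elementary and self-contained, and it identifies the image concretely (including the explicit congruence description of $P_k$); the paper's argument requires the external representation-theoretic input but yields strictly more, namely that \emph{every} non-normal index-$3$ subgroup of $\SL(2,\Z/n\Z)$ has the stated form, so the conclusion is forced by abstract group theory alone.
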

\begin{proof}
By the Chinese Remainder Theorem, non-normal index $3$ subgroups of $$\SL(2,\Z/n\Z)\cong\prod_{p\mid n}\SL(2,\Z/p^{\ell_p}\Z)$$ correspond to non-normal index $3$ subgroups 
of $\SL(2,\Z/p^{\ell_p}\Z)$ where $n=\prod_{p\mid n}p^{\ell_p}$ is the prime factorization of $n$.  Any $2$-Sylow subgroup of $\SL(2,\Z/2^k\Z)$ has index $3$ and is not normal (since reduction modulo $2$ gives a surjection to $\SL(2,\Z/2\Z)\cong \mathfrak{S}_3$) so it is enough to show that this fails for $\SL(2,\Z/p^{k}\Z)$ with $p>2$. 

In general, if $H<G$ is a non-normal subgroup of index $3$ then the (transitive) left action of $G$ on the coset space $G/H$ provides a homomorphism to the symmetric group on 3 letters: $\phi:G\rightarrow \mathfrak{S}_3$.  If $\phi(G)=\mathfrak{A}_3$ (the alternating group on $3$ letters) then we would have $\ker(\phi)=H\lhd G$.  Thus $\phi(G)=\mathfrak{S}_3$, so that any such group $G$ must have an irreducible $2$ dimensional representation with character values $2,-1,0$.  

By \cite{Nobs,Eholzer} we see that for $p>2$, the groups $\SL(2,\Z/p^k\Z)$ only have $2$-dimensional irreducible representations for $p=3,5$, and each of these representations factor over the reduction modulo $p$ map $\SL(2,\Z/p^k\Z)\twoheadrightarrow\SL(2,\Z/p\Z)$.  By inspection neither $\SL(2,\Z/3\Z)$ nor $\SL(2,\Z/5\Z)$ have $\mathfrak{S}_3$ as quotients.
\end{proof}

\section{Main Results}
In this section we prove Conjecture \ref{mainconj} for any super-modular category that admits a minimal (spin) modular extension.

\subsection{Spin Modular Categories}\label{spinsection}

A \textbf{spin modular category} $\CC$ is a modular category with a (chosen) fermion.  
Let $\CC$ be a spin modular category, with fermion $\psi$, (unnormalized) $S$-matrix $\tS$ and $T$-matrix $T$. Proposition II.3 of \cite{16fold} provides a number of useful symmetries of $\tS$ and $T$:
\begin{enumerate}
\item $\tS_{\psi,\alpha}=\epsilon_\alpha d_\alpha$, where $\epsilon_\alpha=\pm 1$ and $\epsilon_\psi=1$.

\item $\theta_{\psi \alpha}=-\epsilon_\alpha \theta_\alpha$.\label{thetasign}

\item $\tS_{\psi\alpha,\beta}=\epsilon_\beta \tS_{\alpha,\beta}$.\label{smatrixsign} 
\end{enumerate}

We have a canonical $\Z/2\Z$-grading $\CC_0\oplus \CC_1$ with simple objects $X\in\CC_0$ if $\epsilon_X=1$ and $X\in\CC_1$ when $\epsilon_X=-1$.  The trivial component $\CC_0$ is a super-modular category, since $\CC_0^\prime=\langle \psi \rangle\cong \sVec$.  
                                 
Since $\theta_X=-\epsilon_X\theta_{\psi X}$ it is clear that $\psi X\not\cong  X$ for $X\in\CC_0$.  However, objects in $\CC_1$ may be fixed by ${-}\otimes \psi$ or not.  This provides another canonical decomposition $\CC_1=\CC_v\oplus \CC_\sigma$ as abelian categories, where a simple object $X\in\CC_v\subset\CC_1$ if $X\psi\not\cong X$ and 
$X\in\CC_\sigma\subset\CC_1$ if $X\psi\cong X$.  Finally, using the action of ${-}\ot \psi$ we make a (non-canonical) decomposition of  $\CC_0=\hC_0\oplus\psi\hC_0$ and $\CC_v=\hC_v\oplus\psi\hC_v$ so that when $X\in\hC_0$ we have $X\psi\in\psi\hC_0$ and similarly  for $\CC_v$.  Notice that for $X\in\CC_0$ we have $X^*\not\cong\psi\otimes X$ since $\theta_X=\theta_{X^*}$, so that we may ensure $X$ and $X^*$ are both in $\hC_0$ or both in $\psi\hC_0$.  On the other hand, for $Y\in\CC_v$ it is possible that $X^*\cong \psi\otimes X$--for example, this occurs for $SO(2)_1$.

As in \cite{BCT} we choose an ordered basis $\Pi=\Pi_0\bigsqcup \psi\Pi_0\bigsqcup \Pi_v\bigsqcup\psi\Pi_v\bigsqcup\Pi_\sigma$ for the Grothendieck ring of $\CC$ that is compatible with the above partition $\CC=\hC_0\oplus\psi\hC_0\oplus\hC_v\oplus\psi\hC_v\oplus\CC_\sigma$.  Using \cite[Proposition II.3]{16fold} we have the block matrix decomposition for the $S$ and $T$ matrices:

$$S=\begin{pmatrix}\frac{1}{2}\hS & \frac{1}{2}\hS & A&A &X\\
\frac{1}{2}\hS & \frac{1}{2}\hS & -A &-A &-X\\
A^T&-A^T & B& -B&0\\
A^T&-A^T &-B & B &0\\
X^T &-X^T &0 &0 &0
\end{pmatrix}\quad T=\begin{pmatrix} \hat{T}&0& 0&0&0\\
		0&-\hat{T} & 0 &0 &0\\
		0&0 &\hat{T_{v}}& 0&0\\
		0&0 &0 & \hT_{v} &0\\
		0 &0 &0 &0 &{T_{\sigma}}
		\end{pmatrix}.$$
 Here $B$ and $\hS$ are symmetric matrices, and each of $\hT,\hT_v$ and $T_\sigma$ are diagonal matrices.

Now consider the following ordered partitioned basis: 
\begin{enumerate}
\item $\Pi_0^+:=\{X_i+\psi X_i: X_i\in\Pi_0\}$,
\item $\Pi_0^-:=\{X_i-\psi X_i: X_i\in\Pi_0\}$,
\item $\Pi_v^+:=\{Y_i+\psi Y_i: Y_i\in\Pi_v\}$, 
\item $ \Pi_\sigma:=\{Z_i\in\Pi_\sigma\}$ and
\item $\Pi_v^-:=\{Y_i-\psi Y_i: Y_i\in\Pi_v\}$.
\end{enumerate}
  
	With respect to this partitioned basis, the $S$ and $T$ matrices have the block form: 
	$$S^\prime=\begin{pmatrix}\hS & 0& 0&0&0\\
	0 & 0 & 2A &X &0\\
	0&2A^T &0& 0&0\\
	0&2X^T &0 & 0 &0\\
	0 &0 &0 &0 &2B
	\end{pmatrix}\quad T^\prime=\begin{pmatrix}0 &\hat{T}& 0&0&0\\
		\hat{T} & 0 & 0 &0 &0\\
		0&0 &\hat{T_{v}}& 0&0\\
		0&0 &0 & T_{\sigma} &0\\
		0 &0 &0 &0 &\hat{T_{v}}
		\end{pmatrix}.$$
From this choice of basis one sees that the representation $\rho$ restricted to $\Gamma_\theta=\langle \mfs,\mft^2\rangle$ has 3 invariant (projective) subspaces, spanned by $\Pi_0^+,\Pi_0^-\cup \Pi_v^+\cup \Pi_\sigma$ and $\Pi_v^-$ respectively.  In particular we have a surjection $\rho(\Gamma_\theta)\twoheadrightarrow\hat{\rho}(\Gamma_\theta)$, mapping the image of $S$ in $\PSU(|\Pi|)$ to the image of $\hS$ in $\PSU(|\Pi_0^+|)$.
We can now prove:
\begin{theorem} Suppose that $\BB$ is a super-modular category with minimal modular extension $\CC$ so that $\BB=\CC_0$.  Assume further that the $T$-matrix of $\CC$ has order $N$. Then  $\hat{\rho}:\Gamma_\theta\rightarrow \PSU(k)$ has $\ker(\hat{\rho})$ a congruence subgroup of level at most $N$.
\end{theorem}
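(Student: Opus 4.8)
The plan is to deduce the statement directly from the Ng--Schauenburg Congruence Subgroup Theorem (Theorem~\ref{nstheorem}) applied to the spin modular extension $\CC$, using the block decompositions of $S'$ and $T'$ recorded above to realize $\hat{\rho}$ as (projectively) a subrepresentation of $\rho|_{\Gamma_\theta}$, where $\rho:\SL(2,\Z)\to\PSU(|\Pi|)$ is the modular representation attached to $\CC$.

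First I would note that $N$ is necessarily even: the chosen fermion $\psi\in\CC$ has $\theta_\psi=-1$, so $-1$ is an eigenvalue of the $T$-matrix of $\CC$ and hence $2\mid N$. Consequently $\Gamma(N)<\Gamma_\theta$ (if $A\equiv I\pmod N$ with $N$ even then $b,c$ are even, so $ac\equiv bd\equiv 0\pmod 2$) and, since $N$ is even, $\Gamma(N)\lhd\Gamma_\theta$; thus the notion of congruence level inside $\Gamma_\theta$ is available. Next, apply Theorem~\ref{nstheorem} to $\CC$, which is modular of rank $|\Pi|$ with $T$-matrix of order $N$: $\ker(\rho)$ is a congruence subgroup of level $N$, i.e. $\Gamma(N)<\ker(\rho)$. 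Restricting to $\Gamma_\theta=\langle\mfs,\mft^2\rangle$ gives $\Gamma(N)<\ker(\rho)\cap\Gamma_\theta=\ker(\rho|_{\Gamma_\theta})$. Finally I would use the surjection $\rho(\Gamma_\theta)\twoheadrightarrow\hat{\rho}(\Gamma_\theta)$ exhibited before the theorem statement: the passage from the basis $\Pi$ to the basis $\Pi_0^+\sqcup\Pi_0^-\sqcup\Pi_v^+\sqcup\Pi_\sigma\sqcup\Pi_v^-$ is block-unitary (each $2\times 2$ block is $\tfrac{1}{\sqrt 2}\begin{pmatrix}1&1\\1&-1\end{pmatrix}$, with $\Pi_\sigma$ untouched), so in $\PSU(|\Pi|)$ the representation $\rho$ is conjugate to the one built from $S',T'$; there the span of $\Pi_0^+$ is $\Gamma_\theta$-invariant, with $\mfs\mapsto\hS$ and $\mft^2\mapsto\hT^2$, which is exactly $\hat{\rho}$ (the matrices $\hS,\hT$ occurring in the block forms are, by construction, those of $\BB=\CC_0$ in equation~(\ref{stfactored}), after matching the normalization $\dim\CC=2\dim\BB$). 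Restriction of projective unitaries to this invariant subspace is a group homomorphism $\rho(\Gamma_\theta)\to\PSU(k)$ with image $\hat{\rho}(\Gamma_\theta)$, whence $\ker(\rho|_{\Gamma_\theta})<\ker(\hat{\rho})$. Combining the inclusions yields $\Gamma(N)<\ker(\hat{\rho})$, so $\ker(\hat{\rho})$ is a congruence subgroup of level at most $N$.

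Since the structural input — the block decomposition of $S,T$ and the resulting surjection $\rho(\Gamma_\theta)\twoheadrightarrow\hat{\rho}(\Gamma_\theta)$ — is already in place, there is no serious obstacle here; the proof is essentially a one-line consequence of Theorem~\ref{nstheorem}. The only points that need care are bookkeeping: (i) checking $N$ is even so that ``level at most $N$'' makes sense inside $\Gamma_\theta$ (otherwise one only gets level at most $2N$); (ii) verifying that ``restrict to the invariant subspace $\langle\Pi_0^+\rangle$'' is well defined on \emph{projective} unitary groups (it is, as a unitary preserving a subspace restricts to a unitary there and scalars restrict to scalars); and (iii) confirming that the $\hS,\hT$ in the block form genuinely coincide, up to scalar, with those of $\BB$ in (\ref{stfactored}) — which is exactly the content of the block computation of \cite[Proposition II.3]{16fold} combined with $D_\CC=\sqrt 2\,D_\BB$.
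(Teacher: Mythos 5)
Your proposal is correct and follows the same route as the paper's own proof: apply the Ng--Schauenburg theorem to the spin modular extension $\CC$, note that $N$ is even (so $\Gamma(N)<\Gamma(2)<\Gamma_\theta$), and then pass through the surjection $\rho(\Gamma_\theta)\twoheadrightarrow\hat{\rho}(\Gamma_\theta)$ coming from the $\Gamma_\theta$-invariant subspace spanned by $\Pi_0^+$ to conclude $\Gamma(N)<\ker(\hat\rho)$. The extra care you take with the block-unitary change of basis and the projective restriction is exactly the ``discussion above'' that the paper invokes, so the two arguments coincide.
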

\begin{proof} Let $S$ and $T$ be the $S$-matrix and $T$-matrix of $\CC$. Consider the projective representation $\rho$ of $\SL(2,\Z)$ defined by $\rho(\mfs)=S$ and $\rho(\mft)=T$.  By Theorem \ref{nstheorem}, $\ker(\rho)$ is a congruence subgroup of level $N$, i.e. $\Gamma(N)<\ker(\rho)$.   Now the restriction of $\rho_{\mid \Gamma_\theta}$ to $\Gamma_\theta$ has $\ker(\rho_{\mid \Gamma_\theta})=\ker(\rho)\cap \Gamma_\theta\supset \Gamma(N)\cap\Gamma_\theta$.  However, since $\CC$ contains a fermion $N$ is even, so $\Gamma(N)<\Gamma(2)<\Gamma_\theta$ hence $\Gamma(N)\cap\Gamma_\theta=\Gamma(N)$.  It follows that $\Gamma(N)<\ker(\rho_{\mid \Gamma_\theta})$.  The discussion above now implies $\Gamma(N)<\ker(\rho_{\mid \Gamma_\theta})<\ker(\hat{\rho})$ as we have a surjection $\rho(\Gamma_\theta)\twoheadrightarrow\hat{\rho}(\Gamma_\theta)$. Thus, we have shown that $\ker(\hat{\rho})$ is a congruence subgroup of level at most $N$, and in particular $\hat{\rho}$ has finite image.

\end{proof}

\subsection{Further Questions}

The charge conjugation matrix $C$ in the basis above has the form $C^\prime_{i,j}=\pm\delta_{i,j^*}$. Since we have arranged that $X_i\in\Pi_0$ implies $X_i^*\in\Pi_0$, $C^\prime_{i,j}=-1$ can only occur for $i=j\in\Pi_v^-$: if $(W-\psi W)^*=-(W-\psi W)$ for some simple object $W$, then $W^*=\psi W$. We see that this can only happen if $W\in\CC_v$ by comparing twists.
Under this change of basis, we have $(S^\prime)^2=\dim(\CC)C^\prime$ and $(S^\prime T^\prime)^3=\frac{D_+}{D}(S^\prime)^2$.  It would be interesting to explore the extra relations among the various submatrices of $S^\prime$ and $T^\prime$.

The 16 spin modular categories of dimension $4$ are of the form $\SO(n)_1$ (where $\SO(n)_1\cong \SO(m)_1$ if and only if $n\cong m\pmod{16}$).  For $n$ odd $\SO(n)_1$ has rank $3$ whereas for $n$ even $\SO(n)_1$ has rank $4$.  For example, the Ising modular category corresponds to $n=1$ and $\SO(2)_1$ has fusion rules like the group $\Z_4$. For any modular category $\DD$ and $1\leq n \leq 16$ the spin modular category $\SO(n)_1\boxtimes \DD$ with fermion $(\psi,\one)$ has either $\CC_\sigma=\emptyset$ or $\CC_v=\emptyset$.  An interesting problem is to classify spin modular categories with either $\CC_\sigma=\emptyset$ or $\CC_v=\emptyset$, particularly those with no $\boxtimes$-factorization.

\section{A Case Study}
Our result gives an upper bound on the level of $\ker(\hat{\rho})$ for super-modular categories $\BB$ with minimal modular extensions $\CC$: the level of $\ker(\hat{\rho})$ is at most the order of the $T$-matrix of $\CC$.  The actual level can be lower: for a trivial example we consider the super-modular category $\sVec$. In this case $\hS=\hT^2=I$ so the level $\ker(\hat{\rho})$ is $1$, yet the order of the $T$ matrix for its (16) minimal modular extensions can be $2,4,8$ or $16$.  More generally for any split super-modular category $\BB=\DD\boxtimes\sVec\subset \DD\boxtimes \SO(n)_1=\CC$ (with fermion $(\one,\psi)$) the ratio of the  levels of the kernels of the $\SL(2,\Z)$ (for $\CC$) and $\Gamma_\theta$ (for $\BB$, i.e. $\DD$) representations can be $2^k$ for $0\leq k\leq 4$.

To gain further insight we consider a family of non-split super-modular categories obtained from the spin modular category (see \cite[Lemma III.7]{16fold})
$ \SU(2)_{4m+2} $. This has modular data:\\
	
	\[\tilde{S}_{i,j}:= \dfrac{\sin \left({\frac{(i+1)(j+1)\pi}{4m+4}}\right)}{\sin ({\frac{\pi}{4m+4}})}, \quad
	T_{j,j}:= e^{\frac{\pi \mathrm{i}(j^{2}+2j)}{8m+8}} \]
	
	where $ 0\leq i,j \leq 4m+2 $.  Since $T$ has order $16(m+1)$, Theorem \ref{nstheorem} implies that the image of the projective representation $\rho:\SL(2,\Z)\rightarrow \PSU(4m+3)$ defined via the normalized $S$-matrix $S$ and $T$ factors over $\SL(2,\Z/N\Z)$ where $N=16(m+1)$. 
    
    The super-modular subcategory $\PSU(2)_{4m+2} $ has simple objects labeled by even $i,j$.  The factorization (\ref{stfactored}) yields the following:
    \begin{equation}\label{PSU2 S and T}\hS_{i,j}= \dfrac{\sin \left({\frac{(2i+1)(2j+1)\pi}{4m+4}}\right)}{\Xi\sin ({\frac{\pi}{4m+4}})}, \quad \hT_{j,j}=e^{\frac{\pi \mathrm{i}(j^2+j)}{2m+2}}\end{equation}
for $0\leq i,j \leq m$, where $\Xi=\frac{\sqrt{\frac{m+1}{2}}}{\sin \left(\frac{\pi }{4 m+4}\right)} $. In \cite{16fold} all $16$ minimal modular extensions of $\PSU(2)_{4m+2}$ are explicitly constructed and each has $T$-matrix of order $16(m+1)$ so that the kernel of the corresponding projective $\SL(2,\Z)$ representation is a congruence subgroup of level $16(m+1)$.  
Our computations suggests the following conjecture, with cases verified using Magma software \cite{magma} indicated in parentheses.  A sample of the results of these computations are found in Table \ref{table1}.  The notation $\langle n,k\rangle$ indicates the $k$th group of order $n$ in the GAP \cite{GAP} library of small groups.  In the last column, we sometimes give a slightly different description than is indicated in part (f) below.  We include the groups $\hat{\rho}(\Gamma_\theta)$, $A_m^\prime:=[A_m,A_m]$ and $\overline{A}_m:=A_m/Z(A_m)$.  As $\hat{\rho}$ is not necessarily irreducible, we have $\hat{\rho}(\Gamma_\theta)\twoheadrightarrow\overline{A}_m$.  The congruence level of $\ker{\hat{\rho}}$ is computed using Lemma \ref{index 3}.

\begin{conj}\label{bigsuconj}
Let $A_m$ be the subgroup of $\SU(k)$ generated by $\hS$ and $\hT^2$ associated with $\PSU(2)_{4m+2}$, the quotient $\oA_m:=A_m/Z(A_m)$ and the commutator subgroup $A_m^\prime:=[A_m,A_m]$. Then 
\begin{enumerate}
\item[(a)] When $m+1=q$ is odd, $\oA_m=\oA_{q-1}\cong \PSL(2,\Z/q\Z)$ (verified for $2\leq m\leq 18$).
\item[(b)] When $m+1=2^n$ we have $|\oA_m|=|\oA_{2^n-1}|=2^{3n+1}$ (verified for $1\leq n\leq 5$).
\item[(c1)] If we write $m+1=2^nq$ where $q$ is odd, then $\oA_m\cong \oA_{2^n-1}\times \oA_{q-1}$ (verified for $1\leq m\leq 14$).
\item[(c2)] If we write $m+1=2^nq$ where $q$ is odd $|\oA_m|=2^{3n+1}q^3\prod_{p|q}\frac{p^2-1}{2p^2}$ (primes $p$) (verified for $1\leq m\leq 21$).
\item[(d)] For $5\leq m+1=p$ prime $A_{p-1}^\prime\cong \SL(2,\Z/p\Z)$ (verified for $4\leq m\leq 12$).
\item[(e)] If we write $m+1=2^nq$ where $q$ is odd, then $A_m^\prime\cong A_{2^n-1}^\prime\times A_{q-1}^\prime$ (verified for $1\leq m\leq 14$).
\item[(f)] For $m+1\not\equiv 0\pmod{4}$, we have $A_m^\prime\lhd\hat{\rho}(\Gamma_\theta)$ and $\hat{\rho}(\Gamma_\theta)$ is an iterated semidirect product of $A_m^\prime$ with cyclic group actions (verified for $1\leq m\leq 14$).  In general, $\ker(\hat{\rho})$ is a congruence subgroup of level $4(m+1)$ (verified for $1\leq m\leq 12$). 
\end{enumerate}
\end{conj}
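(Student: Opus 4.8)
The plan is to use the main theorem together with Lemma \ref{index 3} to realize $A_m$, $\oA_m$ and $A_m^\prime$ as explicit quotients of $\Gamma_\theta/\Gamma(4(m+1))$, and then to reduce everything to the arithmetic of $\SL(2,\Z/p^\ell\Z)$, invoking the classification of low-dimensional representations of these groups from \cite{Nobs,Eholzer}. First I would pin down the order of $\hT^2$: from $\hT_{j,j}=e^{\pi\mathrm{i}(j^2+j)/(2m+2)}$ one gets $\hT^2_{j,j}=e^{\pi\mathrm{i}\,j(j+1)/(m+1)}$, and the entry $j=1$ is already $e^{2\pi\mathrm{i}/(m+1)}$, a primitive $(m+1)$st root of unity, so the order of $\hT^2$ is exactly $m+1$. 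Applying the main theorem to the minimal modular extension $\SU(2)_{4m+2}$, whose $T$-matrix has order $16(m+1)$, shows that $\ker(\hat{\rho})$ is a congruence subgroup of level dividing $16(m+1)$. To sharpen this to $4(m+1)$ in part (f) I would use the isomorphism $\vartheta:\Gamma_\theta\to\Gamma_0(2)$ to write down generators of $\Gamma(4(m+1))$ inside $\Gamma_\theta$, check directly from (\ref{PSU2 S and T}) that each acts by a scalar on $\hS$ and $\hT^2$, and then exhibit a single element of $\Gamma_\theta\cap\Gamma(2(m+1))$ that does not, so that the level is exactly $4(m+1)=2^{n+2}q$ upon writing $m+1=2^nq$ with $q$ odd.

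Once the level is $4(m+1)$, Lemma \ref{index 3} gives $\Gamma_\theta/\Gamma(4(m+1))\cong P_{n+2}\times\SL(2,\Z/q\Z)$, so $\hat{\rho}(\Gamma_\theta)$ is a quotient of this product. Since $|P_{n+2}|$ is a power of $2$ while $|\SL(2,\Z/q\Z)|$ is odd, every normal subgroup of the product respects the decomposition; consequently $\hat{\rho}(\Gamma_\theta)$, and then $\oA_m$ and $A_m^\prime$, each split as a ``$2$-part'' times an ``odd part'', which is the content of the product formulas (c1), (c2) and (e) and reduces all remaining assertions to the cases $m+1$ an odd prime power and $m+1$ a power of $2$. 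For the odd part I would show that the contribution of the $2$-Sylow is scalar modulo $Z(A_m)$, so the odd part of $\hat{\rho}$ factors through $\SL(2,\Z/q\Z)$; matching the sine-and-Gauss-sum data of (\ref{PSU2 S and T}) against \cite{Nobs,Eholzer} (it is a Weil-type representation attached to a cyclic group of order related to $m+1$) identifies it, shows it is projectively faithful modulo its center — yielding parts (a) and (c2) — and, from its irreducible constituents, gives $A_{p-1}^\prime\cong\SL(2,\Z/p\Z)$ for primes $p\geq5$ (part (d)) and hence the general odd statement in (e).

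The $2$-part is where I expect the real difficulty, since the representations of $\SL(2,\Z/2^\ell\Z)$ are far less rigid than in the odd case. Here I would either (i) set up an induction on $n$, analyzing how $\oA_{2^{n+1}-1}$ sits over $\oA_{2^n-1}$ under $\SL(2,\Z/2^{n+3}\Z)\twoheadrightarrow\SL(2,\Z/2^{n+2}\Z)$ and showing each step multiplies the order by $8$ to reach $|\oA_{2^n-1}|=2^{3n+1}$ (part (b)), or (ii) identify the $2$-part of $\hat{\rho}$ with an explicit metaplectic/Weil representation of $\SL(2,\Z/2^{n+2}\Z)$ and compute its projective image by hand; the base case $m=1$ is Example \ref{exnofinite}, where $A_1$ is dihedral of order $32$ and $\oA_1$ has order $16$, as predicted. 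Finally, part (f) carries one further subtlety: because $\hat{\rho}$ is typically reducible, $Z(A_m)$ is strictly larger than the group of scalar matrices, so $\hat{\rho}(\Gamma_\theta)$ surjects properly onto $\oA_m$; to exhibit $\hat{\rho}(\Gamma_\theta)$ as an iterated semidirect product of $A_m^\prime$ with cyclic actions I would decompose $\hS$ and $\hT^2$ into irreducible blocks, compute the abelianization of $\hat{\rho}(\Gamma_\theta)$ from the presentation of $\Gamma_\theta$ as (projectively) $\Z/2\Z\ast\Z$, and build explicit cyclic complements stage by stage, the hypothesis $4\nmid m+1$ being exactly what keeps the unwieldy part of the $2$-Sylow from obstructing the normality of $A_m^\prime$. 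In short, the main obstacle is the $2$-adic analysis: controlling the image of the $2$-Sylow and the precise size of $Z(A_m)$ when the representation is reducible.
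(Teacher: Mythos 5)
The statement you are proving is Conjecture \ref{bigsuconj}: the paper offers no proof of it at all. It is supported only by Magma computations in the explicitly stated ranges (together with Lemma \ref{index 3}, which the authors use merely to interpret the computed images as quotients of $\Gamma_\theta/\Gamma(n)$). So there is no argument in the paper to compare yours against, and your text should be judged as an attempted proof of an open conjecture; as such it is a programme rather than a proof, and at least one of its load-bearing steps is false as stated.

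The concrete failure is the reduction to the ``$2$-part times odd part''. You argue that since $|P_{n+2}|$ is a power of $2$ while $|\SL(2,\Z/q\Z)|$ is odd, every normal subgroup of $P_{n+2}\times\SL(2,\Z/q\Z)$, hence every quotient such as $\hat{\rho}(\Gamma_\theta)$, $\oA_m$, $A_m^\prime$, splits compatibly with the direct factors. But $|\SL(2,\Z/q\Z)|=q^3\prod_{p\mid q}(1-p^{-2})$ is even for every odd $q>1$ (the group contains $-I$), so the two orders are not coprime, Goursat-type diagonal subgroups can occur, and quotients of the product need not decompose. The factorizations in (c1), (c2) and (e) are therefore genuinely nontrivial claims and do not follow formally from Lemma \ref{index 3}; your reduction of the whole conjecture to the prime-power cases collapses at this point. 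The remaining steps are also only intentions: sharpening the congruence level from $16(m+1)$ to $4(m+1)$ by ``checking directly that generators of $\Gamma(4(m+1))$ act by scalars'' is essentially the assertion of part (f) itself, and no mechanism is given for carrying out that verification uniformly in $m$ (the order of $\hT^2$ being $m+1$ does not control the projective level); the identification of the odd part with a Weil-type representation, the claim that the $2$-Sylow contributes only scalars modulo $Z(A_m)$, the computation of $|\oA_{2^n-1}|=2^{3n+1}$, and the control of $Z(A_m)$ for the reducible representation are all left open, and you yourself flag the $2$-adic analysis as the hard part. (Also, Example \ref{exnofinite} concerns $\langle\hS,\hT\rangle$ having infinite projective image, not the group $A_1=\langle\hS,\hT^2\rangle$; the values $|A_1|=32$, $|\oA_1|=16$ come from the paper's tables, not from that example.) In short: the conjecture remains unproved, and your outline would need the splitting step repaired (e.g.\ by an actual identification of $\hat\rho$ with an explicit congruence representation) before the rest could be attempted.
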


	\begin{table}\caption{A Sample of $\PSU(2)_{4k+2}$ Results}\label{table1}
		\begin{tabular}{ | c | c | c| c|c|}\hline
			$m$ & $|\oA_m|$ & $\oA_m$&$A_m^\prime$&$\hat{\rho}(\Gamma_\theta)$\\ \hline\hline
			$1$ & $ 2^4 $ &  $ D_{16}$& $\Z/8\Z$&$D_{16}=A_1^\prime\rtimes\Z/2\Z$\\ \hline
			$2$ & $ 12 $ &  $ \PSL(2,\Z/3\Z)$&$\Q_8$&$\SL(2,\Z/3\Z)\rtimes\Z/2\Z$ \\ \hline
			$3$ & $ 2^7$ & $\langle 128,71\rangle$   &$\langle 64,184\rangle$& $\langle 128,71\rangle$\\ \hline
			$4$ & $ 60 $ &  $ \PSL(2,\Z/5\Z) $& $\SL(2,\Z/5\Z)$&$A_4^\prime\rtimes \Z/2\Z$\\ \hline
			$5$ & $ 2^4\cdot 12$ &  $ D_{16}\times \PSL(2,\Z/3\Z)   $ &$\Z/8\Z\times \Q_8$&$(\Z/8\Z\times \SL(2,\Z/3\Z))\rtimes\Z/2\Z$\\ \hline
			$6$ & $ 168$ &  $ \PSL(2,\Z/7\Z) $&$ \SL(2,\Z/7\Z) $ &$A_6^\prime\rtimes \Z/2\Z$\\ \hline
			$7$ & $ 2^{10} $ &  $\overline{A}_{7}   $ &$|\cdot|=2^9$&$\overline{A}_7$\\ \hline
			$8$ & $ 324$ & $\PSL(2,\Z/9\Z) $&$(\Z/3\Z)^3\rtimes \Q_8$&$(A_8^\prime\rtimes\Z/3\Z)\rtimes\Z/2\Z$\\\hline
			$9$ & $ 2^4\cdot 60$ & $ D_{16}\times \PSL(2,\Z/5\Z) $& $ \Z/8\Z\times \SL(2,\Z/5\Z)$&$A_9^\prime\rtimes\Z/2\Z$\\ \hline
			$10$ & $660$ & $\PSL(2,\Z/11\Z) $& $\SL(2,\Z/11\Z)$& $A_{10}^\prime\rtimes\Z/2\Z$ \\\hline
			$11$ & $2^7\cdot 12$ &$\langle 128,71\rangle\times \PSL(2,\Z/3\Z)$  &$\langle64,184\rangle\times\Q_8$&$\SL(2,\Z/3\Z)\rtimes\langle 128,71\rangle$\\ \hline
			$12$ & $1092 $ &  $ \PSL(2,\Z/13\Z) $ &$\SL(2,\Z/13\Z)$&$\SL(2,\Z/13\Z)\rtimes\Z/2\Z$\\ \hline
			$13$ & $2^4\cdot 168 $ &  $D_{16}\times \PSL(2,\Z/7\Z)$&$\Z/8\Z\times \SL(2,\Z/7\Z)$& $A_{13}^\prime\rtimes\Z/2\Z$\\ \hline
			$14$ & $720$ & $ \PSL(2,\Z/15\Z)$&$\Q_8\times \SL(2,\Z/5\Z)$&$\SL(2,\Z/15\Z)\rtimes\Z/2\Z$\\ \hline
		\end{tabular}
	\end{table}

\section*{Appendix: Magma Code}
For our computational experiments we used the symbolic algebra software Magma \cite{magma}.  In this appendix we give some basic pseudo-code and some sample Magma code to illustrate how we found the image of $\hat{\rho}(\Gamma_\theta)$ in our case study, so that the interested reader can do similar explorations.
Given an integer $m$, the $(m+1)\times (m+1)$ $\hS$ and $\hT^2$ matrices obtained from $\PSU(2)_{4m+2}$ are given in equation (\ref{PSU2 S and T}).
In order to use the Magma software we express the entries of $\hS$ and $\hT^2$ in the cyclotomic field $\Q(\omega)$, where $\omega$ is an $(8m+8)$-th root of unity.
For this we must write $\sin \left({\frac{(2i+1)(2j+1)\pi}{4m+4}}\right)$ and $\sqrt{2(m+1)}$ in terms of $\omega$ for which we use the result of generalized form of quadratic Gauss sums \cite{berndt1981determination}.

Here is the pseudocode to find $\hat{\rho}(\Gamma_\theta)$ for $\PSU(2)_{4m+2}$:\\
\textbf{algorithm} projective image：\\
\textbf{input:} integer $m$\\
\textbf{output:} $\hat{\rho}(\Gamma_\theta) $\\  

\textbf{set} $ K$ to be the cyclotomic field $\Q(\omega)$, where $\omega$ is an $(8m+8)$-th root of unity.\\
\textbf{set} $M=2(m+1)$\\
\textbf{initialize} S and T2 to be $(m+1)\times (m+1)$ zero matrices over K.\\
\textbf{initialize} $\alpha=0$.\\

\textbf{step 1: calculate $\alpha$} \\
\textbf{if} $M\equiv 0\pmod{4}$  \textbf{return} $\alpha=\sum_{n=0}^{M-1} \omega^{4n^2}/(1+\omega^M)$\\
\textbf{else} Consider $M/2=m+1 \pmod{4}$. Notice there are only two cases: $m+1\equiv 1$ (mod 4) and $m+1\equiv 3\pmod{4}$.\\
\textbf{if} $m+1\equiv 1 \pmod{4}$  \textbf{return} $\alpha=\frac{\omega^{m + 1} - \omega^{-(m + 1)}}{\omega^{2m + 2}}\sum_{n=0}^{m} \omega^{8n^2}$\\
\textbf{else} \textbf{return} $\alpha=\frac{\omega^{m + 1} - \omega^{-(m + 1)}}{\omega^{2m + 2}}\sum_{n=0}^{m} \omega^{8n^2}/(\omega^M)$ \\
\textbf{return} $\alpha=2/\alpha$.
\\

\textbf{step 2: define the entries} \\
\textbf{for} $1\leq i,j\leq m+1$, $S_{i,j}=\alpha \dfrac{\omega^{(2i-1)(2j-1)}-\omega^{-(2i-1)(2j-1)}}{2(\omega^M)}$ and 
$T2_{j,j}=\omega^{(2(j-1))^2+4(j-1)}$\\

\textbf{step 3: find the projective image} \\
\textbf{set} $A$ to be the matrix group generated by $S$ and $T2$ defined above, and $ZK$ the group of scalar matrices over $K$. The projective image of $A$ is then $A/(ZK\cap A)$. \\

The following code can be used in Magma \cite{magma} to find the $\hat{\rho}(\Gamma_\theta)$ in this case, and slight modifications will give the other headings of Table \ref{table1}:
\begin{verbatim}
m:=1;
K<w>:= CyclotomicField(8*m+8);
GL:=GeneralLinearGroup(m+1,K);
M:=2*(m+1);
alpha:=0;
if M mod 4 eq 0 then
for n:=0 to M-1 do
alpha:=alpha + w^(4*(n^2));
end for;
alpha:=alpha/(w^M+1);
else
if (m+1) mod 4 eq 1 then
for n:=0 to m do
alpha:= alpha + w^(8*n^2);
end for;
else
for n:=0 to m do
alpha:=alpha + w^(8*(n^2));
end for;
alpha:=alpha/(w^M);
end if;
alpha:=((w^(m + 1) - w^(-(m + 1)))/(w^(2*m + 2)))*alpha;
end if;
alpha:=2/alpha;
S:=ZeroMatrix(K,m+1,m+1);
for i:=1 to m+1 do
for j:=1 to m+1 do
S[i,j]:=(w^((2*i-1)*(2*j-1))-w^(-(2*i-1)*(2*j-1)))/(2*(w^M));
S[i,j]:=S[i,j]*alpha;
end for;
end for;
T2:=ZeroMatrix(K,m+1,m+1);
for j:=1 to m+1 do
T2[j,j]:=w^((2*(j-1))^2+4*(j-1));
end for;
A:=MatrixGroup<m+1,K|S,T2>;
ZK:=MatrixGroup<m+1,K|w*IdentityMatrix(K,m+1)>;
F:=(A/(A meet ZK));
\end{verbatim}
\bibliographystyle{abbrv}
\bibliography{bibliography.bib}
\end{document}